\documentclass[11pt,reqno]{amsart}

\usepackage{amscd,amssymb,amsmath,amsthm}
\usepackage[arrow,matrix]{xy}
\usepackage{graphicx}
\usepackage{color}
\usepackage{cite}
\usepackage{tikz}

\topmargin=0.1in \textwidth5.8in \textheight7.7in

\newtheorem{thm}{Theorem}
\newtheorem{defn}{Definition}

\newtheorem{pro}{Proposition}
\newtheorem{rk}{Remark}

\newtheorem{ex}{Example}

\numberwithin{equation}{section} \setcounter{tocdepth}{1}

\newcommand{\bea}{\begin{eqnarray}}
\newcommand{\eea}{\end{eqnarray}}

%%%%%%%%%%%%%%%%%%%%%%%%%%%%%%%%%%%%%%%%%%%%%%%%%%%%%%%%%

%%%%%%%%%%%%%%%%%%%%%%%%%%%%%%%%%%%%%%%%%%%%%%%%%%%%%%

%\doublespacing
\def\ann{\operatorname{ann}}

%%%%%%%%%%%%%%%%%%%%%%%%%%%%%%

\begin{document}

\title[A class of nilpotent evolution algebras]{A class of nilpotent evolution algebras}

\author{B. A. Omirov,  U. A. Rozikov, M. V. Velasco}

 \address{B. \ A. \ Omirov\\ National University of Uzbekistan,
 4, University street, 100174, Tashkent, Uzbekistan.}
 \email {omirovb@mail.ru}

 \address{U.\ A.\ Rozikov\\ Institute of mathematics,
81, Mirzo Ulug'bek str., 100125, Tashkent, Uzbekistan.}
\email {rozikovu@yandex.ru}

\address{M.\ V. \ Velasco\\ Departamento de An\'{a}lisis Matem\'{a}tico
Facultad de Ciencias Universidad de Granada
18071- Granada, Spain.}
\email {vvelasco@ugr.es}

\begin{abstract} Recently, by A. Elduque and A. Labra a new technique and a type of an evolution
 algebra are introduced. Several nilpotent evolution algebras defined in terms of bilinear forms and
symmetric endomorphisms are constructed. The technique then used for the classification of the nilpotent evolution
algebras up to dimension five. In this paper we develop this technique for high dimensional evolution algebras.
We construct nilpotent evolution algebras of any type. Moreover, we show that, except the cases considered by
Elduque and Labra, this construction of nilpotent evolution algebras does not
give all possible nilpotent evolution algebras.
\end{abstract}
\maketitle

{\bf{Key words.}}
Evolution algebra; non-associative algebra; nilpotent.

{\bf Mathematics Subject Classifications (2010).} 17A60; 17D92.

\section{Introduction} \label{sec:intro}

An evolution algebra over a field is an algebra with a basis on which
multiplication is defined by the product of distinct basis terms being zero and
the square of each basis element being a linear form in basis elements \cite{t}.

In study of any class of algebras, it is important to describe up
to isomorphism at least algebras of lower dimensions. In \cite{M} and \cite{Umlauf}, the
classifications of associative and nilpotent Lie algebras of low
dimensions were given.

About classifications of evolution algebras the following results are known:

In \cite{CLOR} (see also \cite{co}) two-dimensional evolution algebras over the complex numbers were classified.
For the classification of two-dimensional evolution algebras over the real numbers see \cite{Mu}.

Recently,  \cite{Cab} the authors classified three dimensional evolution algebras over a field
having characteristic different from 2 and in which there are
roots of orders 2, 3 and 7. It is proved that there are 116 types of
three-dimensional evolution algebras.

Very recently, in \cite{N} the authors studied the distribution
of finite-dimensional evolution algebras over any base field into isotopism\footnote{The concept of isotopism of algebras was introduced in \cite{A} as a generalization
of isomorphism. Two $n$-dimensional algebras $A$ and $B$ defined over
a field $K$ are isotopic if there exist three non-singular linear transformations $f, g$ and $h$
from $A$ to $B$ such that
$f(u)g(v) = h(uv)$, for all $u, v \in A$.} classes according
to their structure tuples and to the dimension of their annihilators. It is shown
 the existence of four isotopism classes of two-dimensional evolution algebras,
whatever the base field is. For the three-dimensional case
it is shown how to deal with the distribution into isotopism
classes of evolution algebras of higher dimensions.

In \cite{EL} (see also \cite{H}) a classification of indecomposable nilpotent
evolution algebras up to dimension five over algebraically closed fields of characteristic
not two is given. To do this in \cite{EL} the type and several invariant
subspaces related to the upper annihilating series of finite-dimensional
nilpotent evolution algebras are introduced.
A class of nilpotent evolution algebras, defined in terms of a nondegenerate,
symmetric, bilinear form and some commuting, symmetric, diagonalizable
endomorphisms relative to the form, are constructed.

In this paper we develop the methods of \cite{EL} for high dimensional evolution algebras.
We construct nilpotent evolution algebras of any type. We show that, except the cases considered by
Elduque and Labra, this construction of nilpotent evolution algebras does not
give all possible nilpotent evolution algebras.

\section{Basic definitions and facts}
{\it Evolution algebras.} Let $(E,\cdot)$ be an algebra over a field $K$. If it admits a basis
$\{e_1,e_2,\dots\}$, such that
\[
e_i \cdot e_j=
\begin{cases}
0, &\text{if \ $i\ne j$;}\\
\displaystyle \sum_{k}a_{ik}e_k, &\text{if \ $i=j$,}
\end{cases}
\]
then this algebra is called an {\it evolution algebra} \cite{t}. The basis is called a natural basis.  We denote by
$A=(a_{ij})$ the matrix of the structural constants of the evolution
algebra $E$.

It is known that an evolution algebra is commutative but not associative, in general.
For basic properties of the evolution algebra see \cite{t}.

For an evolution algebra $E$ and  $k \geq 1$  we introduce the following sequence
\begin{equation}\label{EE}
E^k=\sum_{i=1}^{k-1}E^iE^{k-i}.
\end{equation}
Since $E$ is a commutative algebra we obtain
\[E^k=\sum_{i=1}^{\lfloor k/2\rfloor}E^iE^{k-i},\]
where $\lfloor x\rfloor$ denotes the integer part of $x$.

\begin{defn} An evolution algebra $E$ is called  nilpotent if there
exists some $n\in \mathbb{N}$ such that $E^n=0$. The smallest $n$ such that
 $E^n=0$ is called the index of  nilpotency.
\end{defn}

The following theorem is known (see \cite{CLOR}).
\begin{thm} \label{27} An $n$-dimensional evolution algebra $E$ is nilpotent iff
the matrix of the structural constants corresponding to $E$ can be written as
\begin{equation}\label{5}
\widehat{A}=\left(
  \begin{array}{ccccc}
 0 & a_{12} & a_{13} &\dots &a_{1n} \\[1.5mm]
 0 & 0 & a_{23} &\dots &a_{2n} \\[1.5mm]
0 & 0 & 0 &\dots &a_{3n} \\[1.5mm]
\vdots & \vdots & \vdots &\cdots & \vdots \\[1.5mm]
0 & 0 & 0 &\cdots &0 \\
\end{array}
\right).
\end{equation}
\end{thm}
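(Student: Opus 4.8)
The plan is to prove the two implications separately, in both cases exploiting that in an evolution algebra every product lands in $E^2=\spa(e_1^2,\dots,e_n^2)$: for $x=\sum_i x_ie_i$ and $y=\sum_j y_je_j$ one has $x\cdot y=\sum_i x_iy_i\,e_i^2$. For the implication ``triangular $\Rightarrow$ nilpotent'', suppose the natural basis is ordered so that $e_i^2=\sum_{j>i}a_{ij}e_j$. Set $V_k=\spa(e_k,\dots,e_n)$, so $V_1=E\supset V_2\supset\cdots\supset V_{n+1}=0$ and $e_i^2\in V_{i+1}$. The key observation is a level-raising lemma: if $U\subseteq V_a$ is a subspace, then $U\cdot E\subseteq V_{a+1}$, because $u=\sum_{i\ge a}u_ie_i$ gives $u\cdot x=\sum_{i\ge a}u_ix_i\,e_i^2\in\sum_{i\ge a}V_{i+1}\subseteq V_{a+1}$. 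A straightforward induction on $k$, using this lemma together with the recursion $E^m=\sum_{i=1}^{m-1}E^iE^{m-i}$ and the fact that in each summand at least one factor $E^i,E^{m-i}$ is a power with exponent $\ge m/2$, then shows $E^m\subseteq V_{k+1}$ whenever $m\ge 2^k$. Taking $k=n$ yields $E^{2^n}\subseteq V_{n+1}=0$, so $E$ is nilpotent.

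For the converse I will first record two facts. First, in any natural basis $\ann(E)=\spa(e_j:e_j^2=0)$: indeed $x\cdot e_j=x_je_j^2$, so $x\in\ann(E)$ iff $x_j=0$ whenever $e_j^2\ne0$. Second, a nonzero nilpotent evolution algebra has nonzero annihilator: if $s$ is minimal with $E^s=0$ then $s\ge2$ and $0\ne E^{s-1}\subseteq\ann(E)$, since $E^{s-1}E\subseteq E^s=0$. Combining the two, every nonzero nilpotent $E$ has at least one basis vector with zero square.

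The heart of the argument is to promote this single zero-square vector to a full ordering. Define an increasing chain of index sets by $N_1=\{j:e_j^2=0\}$ and $N_{k+1}=\{i:e_i^2\in\spa(e_j:j\in N_k)\}$. I claim nilpotency forces $\bigcup_kN_k=\{1,\dots,n\}$. Suppose not; the chain stabilizes at some $S\subsetneq\{1,\dots,n\}$ with complement $T\ne\emptyset$, and $W=\spa(e_j:j\in S)$ is an ideal (for $j\in S$ one has $e_j^2\in W$ by construction, and cross products vanish). The quotient $\bar E=E/W$ is again an evolution algebra, with natural basis $\{\bar e_i:i\in T\}$, and it is nilpotent as a quotient of a nilpotent algebra. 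By stabilization $e_i^2\notin W$ for every $i\in T$, so $\bar e_i^2\ne0$ for all such $i$; hence $\ann(\bar E)=0$ by the first fact, while $\bar E\ne0$ is nilpotent, contradicting the second fact. Thus the $N_k$ exhaust all indices. Writing $\ell(i)=\min\{k:i\in N_k\}$, the construction gives $e_i^2\in\spa(e_m:\ell(m)<\ell(i))$, so reordering the (otherwise unchanged) natural basis by strictly decreasing level makes each $e_i^2$ a combination of strictly later basis vectors; this is precisely the form \eqref{5}.

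I expect the main obstacle to be the exhaustion claim $\bigcup_kN_k=\{1,\dots,n\}$: one must rule out a residual core of basis vectors whose squares keep feeding back into the core, and the quotient-plus-annihilator device above is exactly what forces such a core to be empty. It is worth emphasizing that, once exhaustion holds, no genuine change of basis is required — a mere permutation of the given natural basis already produces the strictly upper triangular matrix \eqref{5}.
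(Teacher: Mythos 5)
The paper does not actually prove this theorem: it is quoted as a known result with a citation to \cite{CLOR}, so there is no in-paper argument to compare yours against. Judged on its own, your proof is correct and self-contained. The forward direction is sound: the level-raising lemma $V_a\cdot E\subseteq V_{a+1}$ together with the observation that every summand $E^iE^{m-i}$ of $E^m$ contains a factor of exponent at least $m/2$ gives $E^{2^n}\subseteq V_{n+1}=0$ (the bound $2^n$ is generous --- one can in fact get index at most $n+1$ --- but nilpotency is all that is claimed). The converse is the substantive part, and your exhaustion argument is the right device: the two facts ($\ann(E)=\spa(e_j:e_j^2=0)$ in any natural basis, and $0\neq E^{s-1}\subseteq\ann(E)$ for a nonzero nilpotent algebra) are both verified correctly, the stabilized set $S$ does span an ideal $W$ because cross-products of basis vectors vanish, the quotient $E/W$ is again a nilpotent evolution algebra on the residual basis, and stabilization forces $\ann(E/W)=0$, contradicting the second fact unless $T=\emptyset$. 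The concluding reordering by decreasing level is also correct, and your closing observation --- that only a permutation of the given natural basis, not a genuine change of basis, is needed --- is a valid strengthening of the statement as written.
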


{\it Upper annihilating series.} Following \cite{EL} we introduce the following definitions:

Let $\mathcal E$ be an evolution algebra with a natural basis $B=\{e_1,\dots,e_n\}$ and matrix of structural constants
$A=(a_{ij})$. The {\it graph} $\Gamma(\mathcal E, B)=(V,E)$, with $V=\{1,\dots,n\}$ and $E=\{(i,j)\in V\times V: a_{ij}\ne 0\}$,
is called the graph attached to the evolution algebra $\mathcal E$ relative to the natural basis $B$.

\begin{defn} For an algebra $\mathcal A$ define the chain $\ann^i(\mathcal A)$, $i\geq 1$ by
$$\begin{array}{ll}
\ann^1(\mathcal A):=\ann(\mathcal A):=\{x\in\mathcal A: x\mathcal A=\mathcal A x=0\},\\[3mm]
\ann^i(\mathcal A)/\ann^{i-1}(\mathcal A):=\ann(\mathcal A/\ann^{i-1}(\mathcal A)).
\end{array}
$$
\end{defn}
 \begin{defn} The following series is called the upper annihilating series:
 $$0=\ann^0(\mathcal A)\subseteq \ann^1(\mathcal A)\subseteq\dots\subseteq \ann^r(\mathcal A)\subseteq\dots$$
 \end{defn}

 It is known that a non-associative  algebra (in particular an evolution algebra)
 is nilpotent if and only if its upper annihilating
 series riches $\mathcal A$, i.e., $\ann^r(\mathcal A)=\mathcal A$, for some $r\geq 1$.

 \begin{defn} Let $\mathcal A$ be a finite-dimensional nilpotent
 non-associative algebra over a field $\mathbb F$, and
 let $r$ be the lowest natural number such that
 $\ann^r(\mathcal A)=\mathcal A$.
 The type of $\mathcal A$ is the sequence
 $[n_1,n_2,\dots,n_r]$ such that $n_1+n_2+\dots+n_i=\dim_{\mathbb F}(\ann^i(\mathcal A))$, for all $i=1,2,\dots,r$.
 Thus
 $$n_i=\dim_{\mathbb F}(\ann^i(\mathcal A))-\dim_{\mathbb F}(\ann^{i-1}(\mathcal A)), \ \ i=1,2,\dots,r.$$
\end{defn}

\section{Nilpotent evolution algebras}

Consider a field $\mathbb F$ of characteristic not equal to 2. Let $\mathcal U$ be a vector space over $\mathbb F$
with $\dim_{\mathbb F} \mathcal U=n$.

\begin{defn}\label{dnk} Let $b : \mathcal U\times \mathcal U\to\mathbb F$ be a nondegenerate symmetric bilinear
form and let $f_i : \mathcal U\to \mathcal U$, $i=1,2,\dots,k-1$ be pairwise commuting, symmetric (relative to
$b$), diagonalizable endomorphisms. We define the algebra $E(\mathcal U, b, f_1,\dots, f_{k-1}):=
\mathcal U\times \underbrace{\mathbb F\times \dots\times \mathbb F}_k$ with multiplication
$$
\left(u,\alpha_1,\dots,\alpha_k\right)\left(v,\beta_1,\dots,\beta_k\right)=$$
\begin{equation}\label{nk}
\left(0, b(u,v), b(f_1(u),v)+\alpha_1\beta_1, b(f_2(u),v)+\alpha_2\beta_2, \dots, b(f_{k-1}(u),v)+\alpha_{k-1}\beta_{k-1}\right),
\end{equation}
for any $u,v\in \mathcal U$ and $\alpha_i, \beta_j\in \mathbb F$.
\end{defn}
\begin{pro}\label{p1} $E(\mathcal U, b, f_1, \dots, f_{k-1})$ is a nilpotent evolution algebra of type
$[\underbrace{1, 1,\dots 1}_k, n]$.
\end{pro}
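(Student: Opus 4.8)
The plan is to produce an explicit natural basis in which the evolution-algebra axioms hold, and then to compute the upper annihilating series directly. The first step is to exploit the hypotheses on the $f_i$. Since $f_1,\dots,f_{k-1}$ are pairwise commuting and each is diagonalizable, they are simultaneously diagonalizable, so $\mathcal U$ decomposes into common eigenspaces. Because each $f_m$ is symmetric relative to $b$, two common eigenspaces attached to distinct eigenvalue tuples are $b$-orthogonal: if $f_m(u)=\mu_m u$ and $f_m(v)=\nu_m v$ with $\mu_m\ne\nu_m$, then $\mu_m b(u,v)=b(f_m(u),v)=b(u,f_m(v))=\nu_m b(u,v)$ forces $b(u,v)=0$. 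As $b$ is nondegenerate and this sum is orthogonal, its restriction to each common eigenspace is nondegenerate, so (char $\ne 2$) each admits a $b$-orthogonal basis. Collecting these yields a common eigenbasis $u_1,\dots,u_n$ of $\mathcal U$ with $b(u_i,u_j)=d_i\delta_{ij}$, $d_i\ne 0$, and $f_m(u_i)=\lambda_{m,i}u_i$.

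Second, I would set $x_i=(u_i,0,\dots,0)$ for $i=1,\dots,n$ and let $y_j$ denote the vector with a $1$ in the $j$-th $\mathbb F$-slot and zeros elsewhere, for $j=1,\dots,k$, and read off the multiplication from \eqref{nk}. The orthogonality of the $u_i$ together with $f_m(u_i)=\lambda_{m,i}u_i$ give $x_ix_j=0$ for $i\ne j$ and
$$x_i^2=d_i\,y_1+\sum_{m=1}^{k-1}\lambda_{m,i}\,d_i\,y_{m+1}.$$
Directly from \eqref{nk} one also gets $x_iy_j=0$, $y_jy_l=0$ for $j\ne l$, $y_j^2=y_{j+1}$ for $1\le j\le k-1$, and $y_k^2=0$. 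Thus products of distinct basis vectors vanish and each square is a linear combination of basis vectors, so $\{x_1,\dots,x_n,y_1,\dots,y_k\}$ is a natural basis and $E(\mathcal U,b,f_1,\dots,f_{k-1})$ is an evolution algebra of dimension $n+k$.

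Third, I would compute the chain $\ann^i$. Writing a general element as $\sum_i s_i x_i+\sum_j t_j y_j$ and imposing that it annihilate every basis vector, the relations $x_i^2=d_i y_1+\cdots$ with $d_i\ne0$ force all $s_i=0$, while $y_j^2=y_{j+1}$ forces $t_1=\cdots=t_{k-1}=0$; hence $\ann^1(E)=\langle y_k\rangle$. The decisive feature is the chain $y_1\to y_2\to\cdots\to y_k$ under squaring: passing to $E/\ann^1$ kills $y_k$, so $y_{k-1}^2=0$ there and $y_{k-1}$ becomes a new annihilator. Iterating, and checking at each stage that the surviving $y_1$-component of $x_i^2$ (with $d_i\ne0$) keeps the $x_i$ out of the annihilator, one gets $\ann^j(E)=\langle y_{k-j+1},\dots,y_k\rangle$ for $1\le j\le k$, each step adding exactly one dimension, so $n_1=\cdots=n_k=1$. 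Finally, in $E/\ann^k$ all $y$'s vanish, whence every $x_i^2=0$ and the whole quotient is annihilated; thus $\ann^{k+1}(E)=E$ and $n_{k+1}=(n+k)-k=n$. This gives nilpotency and type $[\,\underbrace{1,\dots,1}_{k},\,n\,]$.

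The routine part is the multiplication table; the one place demanding care is the simultaneous orthogonal diagonalization in the first step, and in particular verifying that $b$ remains nondegenerate on each common eigenspace so that a $b$-orthogonal eigenbasis with all $d_i\ne0$ exists. Once that basis is fixed, the condition $d_i\ne0$ is exactly what prevents the $x_i$ from entering the annihilating series prematurely, while the shift $y_j^2=y_{j+1}$ is what produces the string of $k$ ones; these two observations are the crux of the type computation.
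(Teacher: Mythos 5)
Your proposal is correct and follows essentially the same route as the paper: build a natural basis from a common $b$-orthogonal eigenbasis of the $f_i$, read off the multiplication table (your $x_i^2=d_iy_1+\sum_m\lambda_{m,i}d_iy_{m+1}$ and $y_j^2=y_{j+1}$ are exactly the paper's (\ref{mnk})), and compute $\ann^j(E)$ inductively to get the type. The only difference is that you spell out the justification for the simultaneous orthogonal diagonalization, which the paper simply asserts from the hypotheses.
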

\begin{proof} By assumptions there is an orthogonal basis $\{u_1,\dots, u_n\}$ of $\mathcal U$, relative
to $b$, consisting of common eigenvalues for $f_i$, $i=1,\dots,k-1$. Then
$$e_1=(u_1, 0, 0,\dots, 0), \dots, e_n=(u_n, 0, 0,\dots, 0),$$
$$e_{n+1}=(0, 1, 0, 0,\dots,0), \dots, e_{n+k}=(0, 0,\dots, 0, 1)$$ is a natural basis of $E=E(\mathcal U, b, f_1, \dots, f_{k-1})$.
Moreover, $e_ie_j=0$ if $i\ne j$ and
  \begin{equation}\label{mnk}
  e_i^2=\left\{\begin{array}{lllll}
  \sum_{j=1}^k\lambda_{ij}e_{n+j}, \ \ \mbox{if} \ \ i=1,\dots,n\\[2mm]
  e_{i+1}, \ \ \mbox{if} \ \ i=n+1,\dots,n+k-1\\[2mm]
  0, \ \ \mbox{if} \ \ i=n+k,
  \end{array}
  \right.
  \end{equation}
  where $$\lambda_{ij}=\left\{\begin{array}{ll}
  b(u_i,u_i), \ \ \mbox{if} \ \ j=1\\[2mm]
  b(f_{j-1}(u_i),u_i), \ \ \mbox{if} \ \ j=2,\dots, k.
  \end{array}
  \right.
  $$
  Now using multiplication (\ref{nk}) we calculate $\ann^j(E)$. We have
  $$\ann(E)=\{\left(u,\alpha_1,\dots,\alpha_k\right)\in E: \ \ \left(u,\alpha_1,\dots,\alpha_k\right)\left(v,\beta_1,\dots,\beta_k\right)=0, \forall \left(v,\beta_1,\dots,\beta_k\right)\in E\}$$ $$=\underbrace{0\times0\times\dots\times 0}_k\times \mathbb F.$$
  Note that the first zero in the RHS of this formula is $n$-dimensional zero-vector.
  $$\ann^2(E)=\{\left(u,\alpha_1,\dots,\alpha_k\right)\in E:$$ $$\left(u,\alpha_1,\dots,\alpha_k\right)\left(v,\beta_1,\dots,\beta_k\right)\in \ann(E), \forall \left(v,\beta_1,\dots,\beta_k\right)\in E\}$$ $$=\underbrace{0\times0\times\dots\times 0}_{k-1}\times\mathbb F\times \mathbb F.$$
  Using induction one can prove that
  $$\ann^j(E)=\underbrace{0\times0\times\dots\times 0}_{k-j+1}\times\underbrace{\mathbb F\times\dots\times \mathbb F}_j,\ \ j=1,\dots,k.$$
  and $\ann^{k+1}(E)=E$. Thus we have
   $$n_i=\dim_{\mathbb F}(\ann^i(E))-\dim_{\mathbb F}(\ann^{i-1}(E))=\left\{\begin{array}{ll}
   1, \ \ \mbox{if} \ \ i=1,\dots k\\[2mm]
   n, \ \ \mbox{if} \ \ i=k+1.
   \end{array}
   \right.
   $$
  \end{proof}
The following proposition shows that Proposition \ref{p1} does not give all nilpotent evolution algebras of type $[\underbrace{1, \dots, 1}_k, n]$.
\begin{pro} \label{p2} For each $k\geq 4$ and $n\geq 1$ there is an evolution algebra $E$ of type $[\underbrace{1, \dots, 1}_k, n]$,
which is non-isomorphic to $E'=E(\mathcal U, b, f_1,\dots,f_{k-1})$ for any collection   $(\mathcal U, b, f_1,\dots,f_{k-1})$
as in Definition \ref{dnk}.
\end{pro}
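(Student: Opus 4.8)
The plan is to exploit the fact that the $k$-th annihilator $\ann^k$ is a characteristic ideal, so it is preserved, up to isomorphism, by any algebra isomorphism. By the proof of Proposition \ref{p1}, for every admissible datum $(\mathcal U, b, f_1, \dots, f_{k-1})$ the ideal $\ann^k\big(E(\mathcal U, b, f_1,\dots,f_{k-1})\big)$ is the span of $e_{n+1},\dots,e_{n+k}$, on which the multiplication is the single chain $e_{n+j}^2=e_{n+j+1}$, $e_{n+k}^2=0$. Denote this $k$-dimensional evolution algebra by $\mathcal{C}_k$. Hence it suffices to produce, for each $k\ge 4$ and $n\ge1$, an evolution algebra $E$ of type $[\underbrace{1,\dots,1}_k,n]$ with $\ann^k(E)\not\cong \mathcal{C}_k$; such an $E$ cannot be isomorphic to any $E(\mathcal U, b, f_1,\dots,f_{k-1})$.

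First I would fix the $k$-dimensional candidate $N_k$ with natural basis $c_1,\dots,c_k$ and products $c_1^2=c_2+c_3$, $c_i^2=c_{i+1}$ for $2\le i\le k-1$, $c_k^2=0$ (all other products zero): it is the chain $\mathcal{C}_k$ perturbed by a single ``skip'' term $c_3$ in $c_1^2$. Then I would assemble $E:=N_k\oplus \mathbb{F} g_1\oplus\dots\oplus\mathbb{F} g_n$ with $g_i^2=c_1$ and all remaining products zero. A direct computation of the upper annihilating series gives $\ann^1(E)=\langle c_k\rangle$, and each successive quotient strips off one more $c_j$, so that $\ann^j(E)=\langle c_{k-j+1},\dots,c_k\rangle$ for $1\le j\le k$, while $\ann^{k+1}(E)=E$; in particular $\ann^k(E)=N_k$ and the type is $[\underbrace{1,\dots,1}_k,n]$ (the $g_i$ all survive until the last step because each $g_i^2=c_1\notin\ann^{k-1}(E)$, so no nontrivial combination of the $g_i$ enters $\ann^k(E)$).

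The heart of the argument, and the step I expect to be the main obstacle, is to show that $N_k\not\cong\mathcal{C}_k$ for $k\ge4$. Since an isomorphism of evolution algebras carries a natural basis to a natural basis, $N_k\cong\mathcal{C}_k$ would mean that $N_k$ admits a natural basis $d_1,\dots,d_k$ with $d_i^2=d_{i+1}$, $d_k^2=0$ and $d_id_j=0$ for $i\ne j$. Such a basis is completely determined by its generator $d_1$, which must lie in $\ann^k(N_k)\setminus\ann^{k-1}(N_k)$ and hence have nonzero $c_1$-coordinate, say $d_1=pc_1+\sum_{i\ge2}x_ic_i$ with $p\ne0$. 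I would then compute $d_2=d_1^2$ and impose the relation $d_1d_2=0$; because the off-diagonal products vanish, this forces the intermediate coordinates $x_2,x_3,\dots$ of $d_1$ to vanish one after another, leaving $d_2=p^2(c_2+c_3)$ and $d_3=d_2^2=p^4(c_3+c_4)$. But then $d_2d_3=p^6\,c_3^2=p^6c_4\ne0$, contradicting $d_2d_3=0$. This is precisely where $k\ge4$ enters: for $k=3$ one has $c_3^2=0$, so the obstructing product $d_2d_3$ vanishes and indeed $N_3\cong\mathcal{C}_3$, consistently with the Elduque--Labra classification in low dimensions.

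Finally I would record the conclusion: $\ann^k(E)=N_k\not\cong\mathcal{C}_k=\ann^k\big(E(\mathcal U,b,f_1,\dots,f_{k-1})\big)$ for every admissible datum, so the characteristic ideal $\ann^k$ already obstructs an isomorphism, and $E$ is the desired evolution algebra of type $[\underbrace{1,\dots,1}_k,n]$ lying outside the Elduque--Labra family. The routine parts are the annihilator-series computation verifying the type and the coordinate cascade in the last display; the only genuinely delicate point is checking that the natural-basis relations leave no freedom to absorb the skip term, which is exactly the $k\ge4$ computation above.
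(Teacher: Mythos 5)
Your proof is correct, but it takes a genuinely different route from the paper's. The paper also uses a ``perturbed chain'' (there every tail generator satisfies $h_i^2=h_{i+1}+h_{i+2}$ for $i=n+1,\dots,n+k-2$, not just the first), but it argues on the whole $(n+k)$-dimensional algebra: it invokes Corollary 3.6 of Elduque--Labra to pin down the block structure $\varphi(h_i)=\gamma_ie_i+\delta_ie_{n+k}$ of a putative isomorphism, and then plays the identities $h_i^2h_{i+1}^2=h_{i+2}^2$ against $e_{i+1}e_{i+2}=0$ to force $\gamma_{i+2}=0$ and hence $\det\varphi=0$. You instead localize the obstruction in the characteristic ideal $\ann^k$: for every admissible datum $\ann^k(E')$ is the pure chain $\mathcal{C}_k$, so it suffices to show that your $k$-dimensional perturbed chain $N_k$ is not isomorphic to $\mathcal{C}_k$, which you do by a self-contained natural-basis computation. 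Your reduction buys independence from the external corollary and isolates the $k\geq 4$ phenomenon inside a $k$-dimensional algebra; the paper's argument stays in the given algebra and is the template it reuses for the analogous non-isomorphism statement in Theorem 4. One small correction to your cascade: the single relation $d_1d_2=0$ only forces $x_2=x_3=0$, not the higher coordinates (for $i\geq 3$ the $c_{i+2}$-equation reads $x_{i+2}x_{i+1}^2=0$, which is automatic once $x_3=0$), so in general $d_2=p^2(c_2+c_3)+x_4^2c_5+\cdots+x_{k-1}^2c_k$ rather than exactly $p^2(c_2+c_3)$. This does not damage the argument: the extra terms live in $c_5$ and beyond, the $c_4$-coefficient of $d_2d_3$ is still exactly $p^6\neq 0$, and the contradiction with $d_2d_3=0$ stands.
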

\begin{proof} Let $E$ be of type $[\underbrace{1, \dots, 1}_k, n]$ and $\{h_1,\dots,h_n, h_{n+1},\dots,h_{n+k}\}$ be
 a natural basis of this algebra. Moreover,
$$\ann^j(E)=\underbrace{0\times0\times\dots\times 0}_{k-j+1}\times\underbrace{\mathbb F\times\dots\times \mathbb F}_j,\ \ j=1,\dots,k.$$
  and $\ann^{k+1}(E)=E$. Then we choose $E$ such that $h_ih_j=0$ if $i\ne j$ and
  \begin{equation}\label{bnk}
  h_i^2=\left\{\begin{array}{lllllll}
  h_{n+1}, \ \ \mbox{if} \ \ i=1,\dots,n\\[2mm]
  h_{i+1}+h_{i+2}, \ \ \mbox{if} \ \ i=n+1, n+2, \dots, n+k-2\\[2mm]
  h_{n+k}, \ \ \mbox{if} \ \ i=n+k-1\\[2mm]
  0, \ \ \mbox{if} \ \ i=n+k.
  \end{array}
  \right.
  \end{equation}

 We shall show that there is no a change from basis $\{h_i\}$ (of algebra $E$) with multiplication (\ref{bnk}) to the
 basis $\{e_i\}$ (of algebra denoted by $E'$) with multiplication (\ref{mnk}).
 We note that if such $\varphi$, (where $\det\varphi\ne 0$) exists then
 $$\varphi(\ann^m(E))=\ann^m(E'), \ \ 1\leq m\leq k+1.$$
 Moreover, by Corollary 3.6 of \cite{EL}, $\varphi$ has the following block structure:
 $$\left(\begin{array}{ccccc}
 *&0&0&\dots&*\\
 0&*&0&\dots&*\\
 0&0&*&\dots&*\\
 \vdots&\vdots&\vdots&\ddots&\vdots\\
 0&0&0&\dots&*,
 \end{array}
 \right)$$
 i.e.
 $$\varphi(h_i)=\gamma_{i}e_i+\delta_{i}e_{n+k}, \ \ \gamma_i, \delta_i\in \mathbb F.$$
 Thus $\det(\varphi)=\gamma_1\dots\gamma_{n+k}$.
 From (\ref{bnk}) we get
 $$h_{i}^2h_{i+1}^2=h_{i+2}^2, \ \ i=n+1, n+2, \dots, n+k-2.$$
 Consequently,
 $$\varphi(h_{i+2}^2)=\varphi(h_{i}^2)\varphi(h_{i+1}^2)=\varphi(h_i)^2\varphi(h_{i+1})^2=(\gamma_ie_{i}+\delta_ie_{n+k})^2
 (\gamma_{i+1}e_{i+1}+\delta_{i+1}e_{n+k})^2$$
 $$ =\gamma_i^2e_i^2\gamma_{i+1}^2e_{i+1}^2=\gamma_i^2\gamma_{i+1}^2e_{i+1}e_{i+2}=0$$
 for each $i=n+1, n+2, \dots, n+k-2.$
 Using this we obtain
 $$0=\varphi(h_{i+2}^2)=\varphi(h_{i+2})^2=\gamma^2_{i+2}e_{i+2}^2=\gamma^2_{i+2}e_{i+3},$$
 hence $\gamma_{i+2}=0$ for each $i=n+1,\dots,n+k-3$.
 Thus if $k\geq 4$ then $\det(\varphi)=0$, i.e. there is no isomorphism between $E$ and $E'$.
 \end{proof}
 \begin{rk} In \cite{EL} for $k\leq 3$ it is shown that any algebra of type $[\underbrace{1, \dots, 1}_k, n]$ is isomorphic
 to $E(\mathcal U, b, f_1,\dots,f_{k-1})$ for some $(\mathcal U, b, f_1,\dots,f_{k-1})$. Proposition \ref{p2} shows that this kind of result  is not true for any $k>3$.
 \end{rk}
\begin{defn}\label{d1} Let $b:\mathcal U\times \mathcal U\to \mathbb F$ be a nondegenerate, symmetric, bilinear form and let
$0\ne u\in \mathcal U$. Define the algebra $$E_{lr}(\mathcal U,b,u):= \underbrace{\mathbb F\times \mathbb F\times \dots \times \mathbb F}_l\times \mathcal U\times  \underbrace{\mathbb F\times \mathbb F\times \dots \times \mathbb F}_r$$ with multiplication
$$(\alpha_1,\dots,\alpha_l,x,\beta_1,\dots,\beta_r)(\gamma_1,\dots,\gamma_l,y,\delta_1,\dots,\delta_r)=$$
\begin{equation}\label{mul}(0,\alpha_1\gamma_1,\alpha_2\gamma_2,\dots, \alpha_{l-1}\gamma_{l-1}, \alpha_l\gamma_lu, b(x,y), \beta_1\delta_1, \dots, \beta_{r-1}\delta_{r-1}),
\end{equation}
for any $x,y\in \mathcal U$ and $\alpha_i, \beta_j,\gamma_k,\delta_m\in \mathbb F$.
\end{defn}
\begin{thm} $E_{lr}(\mathcal U,b,u)$ is a nilpotent evolution algebra of type $[\underbrace{1,1,\dots,1}_l,n,\underbrace{1,1,\dots,1}_r]$.
\end{thm}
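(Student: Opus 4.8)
The plan is to imitate the structure of the proof of Proposition \ref{p1}: first exhibit an explicit natural basis and compute the squares of its elements, then verify nilpotency via the form of the structural constants matrix, and finally compute the upper annihilating series to read off the type. By the assumptions on $b$, the space $\mathcal U$ admits an orthogonal basis $\{u_1,\dots,u_n\}$ relative to $b$, and I would set
\begin{align*}
e_1&=(1,0,\dots,0,0,0,\dots,0),\ \dots,\ e_l=(0,\dots,1,0,0,\dots,0),\\
e_{l+i}&=(0,\dots,0,u_i,0,\dots,0),\quad i=1,\dots,n,\\
e_{l+n+1}&=(0,\dots,0,0,1,0,\dots,0),\ \dots,\ e_{l+n+r}=(0,\dots,0,0,0,\dots,1),
\end{align*}
placing the standard generators in the $l$ left copies of $\mathbb F$, the orthogonal eigenvectors in the $\mathcal U$-slot, and the standard generators in the $r$ right copies. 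Reading off (\ref{mul}), one checks $e_ie_j=0$ for $i\ne j$, while the squares are: $e_i^2=e_{i+1}$ for $i=1,\dots,l-1$; $e_l^2=(0,\dots,0,u,0,\dots,0)=\sum_{j}c_j e_{l+j}$ where $u=\sum_j c_j u_j$ in the chosen basis; $e_{l+i}^2=b(u_i,u_i)\,e_{l+n+1}$ for $i=1,\dots,n$ (nonzero since $b$ is nondegenerate on the orthogonal basis); $e_{l+n+j}^2=e_{l+n+j+1}$ for $j=1,\dots,r-1$; and $e_{l+n+r}^2=0$.

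With this basis in hand, nilpotency is immediate from Theorem \ref{27}: ordering the basis as above, every square lies strictly ``to the right'' of the generator being squared, so the structural-constants matrix is strictly upper triangular in the sense of (\ref{5}), and hence $E_{lr}(\mathcal U,b,u)$ is nilpotent. The substantive part is the computation of the upper annihilating series, and I expect this to be the main obstacle, precisely because the middle $\mathcal U$-block behaves differently from the one-dimensional end-blocks. The left copies and the right copies each contribute a one-step climb in the annihilator filtration (exactly as the single-block computation in Proposition \ref{p1}), whereas the entire $n$-dimensional space $\mathcal U$ collapses into a single annihilator step, contributing the jump of size $n$.

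Concretely, I would compute $\ann(E_{lr})$ directly from (\ref{mul}): an element annihilates everything iff its product with each $e_j$ vanishes, and inspection shows $\ann(E_{lr})=\underbrace{0\times\dots\times 0}_l\times 0_{\mathcal U}\times\underbrace{0\times\dots\times 0}_{r-1}\times\mathbb F$, the last right-copy. Then by induction one shows that the successive quotients peel off the right copies one at a time, next absorb the whole $\mathcal U$-slot in a single step (using nondegeneracy of $b$, so that $b(x,\cdot)=0$ forces $x=0$, and thus $\mathcal U$ enters $\ann^{r+1}(E_{lr})$ all at once modulo the lower terms), and finally peel off the left copies one at a time. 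This yields
$$\dim\ann^i(E_{lr})=\begin{cases} i, & 1\le i\le r,\\ r+n, & i=r+1,\\ r+n+(i-r-1), & r+1< i\le r+l+1,\end{cases}$$
with $\ann^{r+l+1}(E_{lr})=E_{lr}$, so that the successive differences $n_i$ are $1$ ($r$ times), then $n$, then $1$ ($l$ times); since the type is read from the $\ann^i$ filtration starting at $\ann^1$, the resulting type is $[\underbrace{1,\dots,1}_l,n,\underbrace{1,\dots,1}_r]$, with the left $1$'s appearing last in the filtration but first in the type sequence. The only genuine care needed is to confirm that the $\mathcal U$-block truly contributes one step of dimension $n$ and not several steps, which follows from the fact that on $\mathcal U$ the multiplication (\ref{mul}) uses only $b$ with no iterated chain, so every element of $\mathcal U$ reaches the annihilator filtration at the same level.
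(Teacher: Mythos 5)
Your proposal is correct and follows essentially the same route as the paper's proof: the same natural basis, the same multiplication table (\ref{ma}), and the same inductive computation of the upper annihilating series, with the $r$ right copies of $\mathbb F$ peeled off one at a time, the whole of $\mathcal U$ absorbed in a single step of dimension $n$ (using nondegeneracy of $b$), and then the $l$ left copies peeled off one at a time. The only cosmetic differences are your explicit appeal to Theorem \ref{27} for nilpotency (the paper instead reads nilpotency off the fact that the annihilating series reaches $E$) and your remark about the ordering of the type; note that, exactly as in the paper's own computation, the jump of size $n$ occurs at step $r+1$ of the filtration, so the positions of the blocks of $l$ and $r$ ones in the displayed type are interchanged relative to the filtration order --- a labelling issue already present in the paper's statement.
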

\begin{proof} Let $\{u_1,\dots,u_n\}$ be an orthogonal basis of $\mathcal U$ related to $b$. Then using (\ref{mul}) it is easy to see that
 $$e_1=(1,0,\dots,0), e_2=(0,1,0,\dots 0), \dots ,e_l=(\underbrace{0,\dots,0,1}_l, 0,\dots,0),$$
  $$e_{l+1}=(\underbrace{0,\dots,0}_l,u_1,\underbrace{0,\dots,0}_r), \dots ,e_{l+n}=(\underbrace{0,\dots,0}_l,u_n,\underbrace{0,\dots,0}_r),$$
  $$ e_{l+n+1}=(0,\dots,0,\underbrace{1,0,\dots,0}_r), \dots, e_{l+n+r}=(0,\dots,0,\underbrace{0,0,\dots,1}_r)$$
  is a natural basis of $E=E_{lr}(\mathcal U,b,u)$ making it an evolution algebra.
  Moreover, $e_ie_j=0$ if $i\ne j$. If $u=\sum_{k=1}^nc_ku_k$ and $\lambda_k=b(u_k,u_k)$ then
  \begin{equation}\label{ma}
  e_i^2=\left\{\begin{array}{lllll}
  e_{i+1}, \ \ \mbox{if} \ \ i=1,\dots,l-1\ \ \mbox{and} \ \ i=l+n+1,l+n+2,\dots, l+n+r-1\\[2mm]
  \sum_{k=1}^nc_ke_{l+k},\ \ \mbox{if} \ \ i=l\\[2mm]
  \lambda_ie_{l+n+1}, \ \ \mbox{if} \ \ i=l+j, \ \ j=1,\dots,n\\[2mm]
  0, \ \ \mbox{if} \ \ i=l+n+r.
  \end{array}
  \right.
  \end{equation}
  In Figure \ref{f1s} the graph of the evolution algebra with multiplication (\ref{ma}) is given.
  \begin{center}
  \begin{figure}
\begin{tikzpicture}  [-latex , scale=.8,auto=left, every node/.style={circle,top color=white!200}]
  \node (n1) at (-10,0) {$e_1$};
  \node (n2) at (-8,0)  {$e_2$};
  \node (n3) at (-6,0)  {$\dots$};
  \node (n4) at (-4,0) {$e_l$};
  \node (n5) at (-2,0)  {$\vdots$};
  \node (n6) at (-2,4)  {$e_{l+1}$};
  \node (n7) at (-2,-4)  {$e_{l+n}$};
  \node (n8) at (0,0) {$e_{l+n+1}$};
  \node (n9) at (2,0) {$e_{l+n+2}$};
  \node (n10) at (4,0) {$\dots$};
  \node (n11) at (6,0) {$e_{l+n+r}$};
  \node (n12) at (-2,2)  {$e_{l+2}$};
  \node (n13) at (-2,-2)  {$e_{l+n-1}$};
  \foreach \from/\to in {n1/n2,n2/n3,n3/n4,n4/n5,n4/n6,n4/n7,n4/n12,n4/n13,n5/n8,n6/n8,n7/n8,n12/n8,n13/n8,n8/n9,n9/n10,n10/n11}
    \draw (\from) -- (\to);
\end{tikzpicture}
\caption{The graph of the evolution algebra given by multiplication (\ref{ma}).} \label{f1s}
\end{figure}
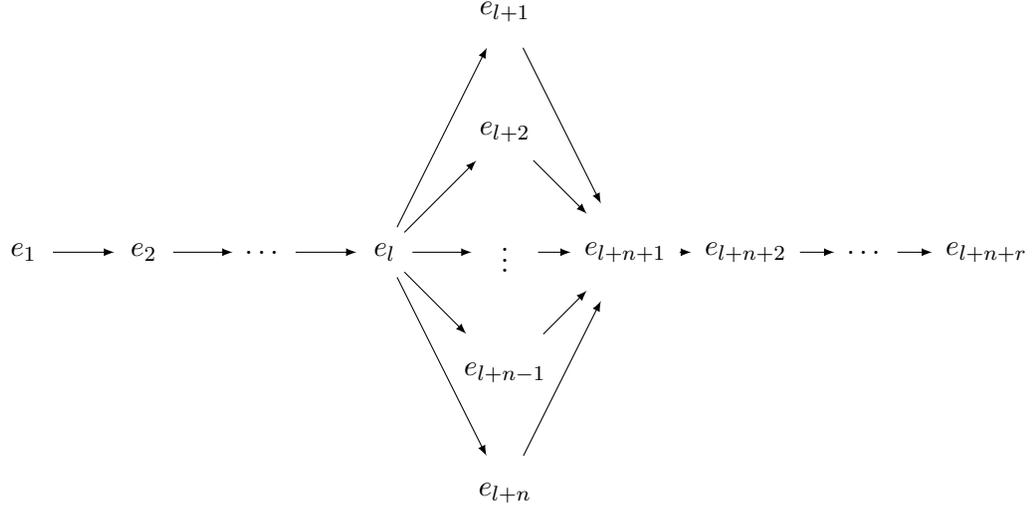
\end{center}

   Now we check the type of this evolution algebra.
 From (\ref{mul}) it follows that
 $$\ann(E)=\underbrace{0\times0\times\dots \times 0}_{l+r}\times \mathbb F;$$
 Note that in the last formula and below one $0$ is $n$-dimensional zero-vector.
 $$ \ann^2(E)=\left\{(\alpha_1,\dots,\alpha_l,x,\beta_1,\dots,\beta_r)\in E:\right. $$ $$ (\alpha_1,\dots,\alpha_l,x,\beta_1,\dots,\beta_r)(\gamma_1,\dots,\gamma_l,y,\delta_1,\dots,\delta_r)=$$
$$(0,\alpha_1\gamma_1,\alpha_2\gamma_2,\dots, \alpha_{l-1}\gamma_{l-1}, \alpha_l\gamma_lu, b(x,y), \beta_1\delta_1, \dots, \beta_{r-1}\delta_{r-1})\in \underbrace{0\times0\times\dots \times 0}_{l+r}\times \mathbb F,$$
  $$\ \ \mbox{for any} \ \ \left.(\gamma_1,\dots,\gamma_l,y,\delta_1,\dots,\delta_r)\in E\right\}=\underbrace{0\times0\times\dots \times 0}_{l+r-1}\times \mathbb F\times \mathbb F;$$
 Similarly by induction one shows that
 $$\ann^{j}(E)=\underbrace{0\times0\times\dots \times 0}_{l+r-j+1}\times \underbrace{\mathbb F\times \dots \times\mathbb F}_j, \ \ j=1,\dots,r.$$
$$ \ann^{r+1}(E)=\left\{(\alpha_1,\dots,\alpha_l,x,\beta_1,\dots,\beta_r)\in E: \right.$$ $$ (\alpha_1,\dots,\alpha_l,x,\beta_1,\dots,\beta_r)(\gamma_1,\dots,\gamma_l,y,\delta_1,\dots,\delta_r)$$
$$=(0,\alpha_1\gamma_1,\alpha_2\gamma_2,\dots, \alpha_{l-1}\gamma_{l-1}, \alpha_l\gamma_lu, b(x,y), \beta_1\delta_1, \dots, \beta_{r-1}\delta_{r-1})\in$$ $$ \underbrace{0\times0\times\dots \times 0}_{l+1}\times \underbrace{\mathbb F\times \dots \times\mathbb F}_r,\ \ \mbox{for any} \ \ \left.(\gamma_1,\dots,\gamma_l,y,\delta_1,\dots,\delta_r)\in E\right\}$$ $$=\underbrace{0\times0\times\dots \times 0}_{l}\times\mathcal U\times \underbrace{\mathbb F\times \dots \times\mathbb F}_r;$$
  Now using induction over $m$ one can show that
   $$\ann^{r+1+m}(E)=\underbrace{0\times0\times\dots \times 0}_{l-m}\times\underbrace{\mathbb F\times \dots \times\mathbb F}_m\times \mathcal U\times \underbrace{\mathbb F\times \dots \times\mathbb F}_r , \ \ m=1,\dots,l.$$
   Thus we have
   $$n_i=\dim_{\mathbb F}(\ann^i(E))-\dim_{\mathbb F}(\ann^{i-1}(E))=\left\{\begin{array}{ll}
   1, \ \ \mbox{if} \ \ i\ne r+1\\[2mm]
   n, \ \ \mbox{if} \ \ i=r+1.
   \end{array}
   \right.
   $$
\end{proof}

\begin{thm}\label{t3a}  If $l \geq 2$ or $r\geq 2$ then for each $n\geq 1$ there is an evolution algebra $E$ of type $[\underbrace{1,1,\dots,1}_l,n,\underbrace{1,1,\dots,1}_r]$ which is not isomorphic to an algebra $E_{lr}(\mathcal U, b, u)$, for any $(\mathcal U, b, u)$ as in Definition \ref{d1}.
\end{thm}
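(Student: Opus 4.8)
The plan is to follow the strategy of Proposition \ref{p2}: for the given $l$, $r$, $n$ I would produce an explicit evolution algebra $E$ of type $[\underbrace{1,\dots,1}_l,n,\underbrace{1,\dots,1}_r]$ whose multiplication agrees with (\ref{ma}) except for the insertion of one extra basis vector (just as (\ref{bnk}) modifies the chain in Proposition \ref{p2}), and then show that no isomorphism onto an algebra $E'=E_{lr}(\mathcal U,b,u)$ of Definition \ref{d1} can exist. Concretely, on a natural basis $\{h_1,\dots,h_{l+n+r}\}$ with $h_ih_j=0$ for $i\ne j$ I would retain all products of (\ref{ma}) but add a single summand creating a ``double edge'' in the graph of Figure \ref{f1s}; when a chain has length at least two the cleanest choice sends the first vertex of that chain to the sum of the next two vertices of the spine $e_1\to\dots\to e_l\to\{e_{l+1},\dots,e_{l+n}\}\to e_{l+n+1}\to\dots\to e_{l+n+r}$. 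In every case the added summand lies in an annihilator layer no deeper than the term it accompanies, so that re-running the annihilator computation of the preceding theorem gives the same type; this check is routine.

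Suppose then that $\varphi:E\to E'$ is an isomorphism. Since $\varphi$ carries a natural basis to a natural basis, $\{f_i:=\varphi(h_i)\}$ is a natural basis of $E'$ and $\varphi$ preserves the upper annihilating series, $\varphi(\ann^m(E))=\ann^m(E')$; writing $f_i=\sum_m P_{im}e_m$ in the standard basis of $E'$, Corollary 3.6 of \cite{EL} makes the matrix $P$ triangular with respect to the layers of that series. The defining relations $f_if_j=0$ ($i\ne j$) impose two further constraints: the columns of $P$ indexed by the two chains, and by the branch vertex $l$, are monomial (have at most one nonzero entry), while the $n$ columns indexed by the $\mathcal U$-block are mutually orthogonal for $b$.

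Combining triangularity with the monomial columns pins down the support of each $f_i$. In particular, for every $i$ with $h_i$ in the $r$-chain or in the $\mathcal U$-block, $f_i$ is supported on $e_i$, possibly some $\mathcal U$-vectors, and the deepest vector $e_{l+n+r}$; since $e_{l+n+r}\in\ann^1(E')$ is killed on squaring, $f_i^2$ is then a single basis vector (a multiple of $e_{i+1}$ for an $r$-chain index and of $e_{l+n+1}$ for a $\mathcal U$-index). This is the analogue of the identity $\varphi(h_i)^2=\gamma_i^2e_{i+1}$ used after (\ref{bnk}), and it is what makes the defect detectable: by construction $E$ contains either a structure coefficient that the constraints force to vanish for $\{f_i\}$ although it is nonzero for $E$, or a product $h_a^2h_b^2\ne0$ whose factors $\varphi(h_a)^2,\varphi(h_b)^2$ are multiples of \emph{distinct} basis vectors, so that $\varphi(h_a)^2\varphi(h_b)^2=0$. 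For instance, when $r\ge3$ the choice $h_{l+1}^2=\lambda_1h_{l+n+1}+h_{l+n+2}$ gives $h_{l+1}^2h_{l+n+1}^2=h_{l+n+3}\ne0$, while $\varphi(h_{l+1})^2$ is a multiple of $e_{l+n+1}$ and $\varphi(h_{l+n+1})^2$ a multiple of $e_{l+n+2}$, whence $\varphi(h_{l+1})^2\varphi(h_{l+n+1})^2=0$. Applying $\varphi$ then yields $\varphi(h_c)=0$ for some basis vector $h_c$, contradicting $\det\varphi\ne0$.

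I expect the main obstacle to be the asymmetry created by the branch $e_l^2=\sum_k c_ke_{l+k}$ and the merge $e_{l+j}^2=\lambda_je_{l+n+1}$: the $\mathcal U$-block lies above the $r$-chain but below the $l$-chain in the annihilating series, so $\varphi$ may feed $\mathcal U$-components into the images of $l$-chain generators but not into those of $r$-chain generators. Hence the ``single square'' phenomenon is available along the $r$-chain and the $\mathcal U$-block but not along the $l$-chain, which is what fixes the side on which the defect must be planted and explains why the hypothesis is $l\ge2$ or $r\ge2$ rather than a bound on one chain alone. The genuinely delicate cases are the extremal ones, where the modified chain has length exactly two so that its terminal vertex has square zero; there the defect cannot run along the chain and must be routed through the branch or the merge, and one must verify that the coefficient it produces still cannot be matched by any admissible $P$ — the crux being the orthogonality freedom of the $\mathcal U$-columns, which for $n\ge2$ prevents those columns from being monomial.
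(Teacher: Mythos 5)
Your overall strategy --- exhibit an explicit algebra of the right type obtained by perturbing (\ref{ma}), then use preservation of the upper annihilating series to force a triangular change of basis and derive a contradiction --- is exactly the paper's strategy, and your general discussion of the constraints on $\varphi$ is sound. But the proposal does not actually produce a working construction for all the cases the theorem claims, and in two places it commits to choices that would fail.

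First, your only concrete defect on the $r$-side, $h_{l+1}^2=\lambda_1h_{l+n+1}+h_{l+n+2}$ detected via $h_{l+1}^2h_{l+n+1}^2=h_{l+n+3}\ne 0$, requires $r\geq 3$; you explicitly defer $r=2$ as a ``delicate case'' to be ``routed through the branch or the merge,'' and the closing appeal to orthogonality of the $\mathcal U$-columns is stated only for $n\geq 2$, whereas the theorem asserts the result for every $n\geq 1$. So the cases $r=2$ (any $n$) and $n=1$ are genuinely open in your write-up. The paper avoids this entirely by using a different kind of defect: not a double edge but a level-skipping edge, $h_{l+1}^2=ch_{l+n+r}$ with $c\ne 0$ (display (\ref{ma2})), which sends a $\mathcal U$-block generator straight into $\ann^1$. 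One then shows $\gamma_{p,l+n+r-1}=0$ for all $\mathcal U$-block indices $p$ from $\psi(h_ph_{l+n+r-1})=0$, and compares the two computations of $\psi(h_{l+1}^2)$ to get $0\ne c\gamma_{l+n+r,l+n+r}=\gamma^2_{l+1,l+n+r-1}=0$. This works uniformly for all $r\geq 2$ and all $n\geq 1$.

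Second, your last paragraph claims the ``single square'' defect is \emph{not} available along the $l$-chain and that this ``fixes the side on which the defect must be planted.'' Taken at face value this leaves the case $l\geq 2$, $r=1$ with no construction at all, since there is then nowhere on the $r$-side to plant anything. In fact the paper plants the defect precisely on the $l$-chain in that case: it takes $h_i^2=h_{i+1}+h_{i+2}$ for $i=1,\dots,l$ (display (\ref{ma12})) and detects it exactly as in Proposition \ref{p2}, by noting that in $E_{lr}(\mathcal U,b,u)$ one has $e_i^2e_{i+1}^2=0$ for $i=1,\dots,l$, while the leading term of $\varphi(e_i)^2\varphi(e_{i+1})^2$ is $\gamma_{ii}^2\gamma_{i+1,i+1}^2h_{i+2}^2\ne 0$, forcing $\gamma_{ii}\gamma_{i+1,i+1}=0$ and hence $\det\varphi=0$ once $l\geq 2$. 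So the $l$-chain is not only usable but is the side the paper uses for that half of the hypothesis. To repair your proposal you need either to adopt the paper's level-skipping edge for the $r\geq 2$ case and the $l$-chain double edges for the $l\geq 2$ case, or to supply the missing arguments for $r=2$, $n=1$, and $l\geq 2$ with $r=1$.
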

\begin{proof} Let $B=\{h_1,\dots,h_{l+n+r}\}$ be a natural basis of $E$ with type $[\underbrace{1,1,\dots,1}_l,n,\underbrace{1,1,\dots,1}_r]$. Then
$$ ann^j(E)=\left\{\begin{array}{llll}
\bigoplus_{i=1}^j\mathbb Fe_{l+n+r-i+1}, \ \ \mbox{if} \ \ j=1,\dots,r\\[2mm]
\bigoplus_{i=1}^n\mathbb Fe_{l+i}\bigoplus_{s=1}^r\mathbb Fe_{l+n+s}, \ \ \mbox{if} \ \ j=r+1\\[2mm]
\bigoplus_{t=1}^m\mathbb Fe_{l-t+1}\bigoplus_{i=1}^n\mathbb Fe_{l+i}\bigoplus_{s=1}^r\mathbb Fe_{l+n+s}, \ \ \mbox{if} \ \ j=r+1+m, \, m=1,\dots,l.
\end{array}
\right.
$$
Using these formulas we get the following multiplication table:
 \begin{equation}\label{ma1}
  \begin{array}{lllll}
  h_i^2=\sum_{j=i+1}^{l+n+r}\alpha_{i,j}h_j, \ \ \mbox{if} \ \ i=1,\dots,l\\[3mm]
  h_{l+t}^2=\sum_{j=l+n+1}^{l+n+r}\alpha_{l+t,j}h_j, \ \ \mbox{if} \ \ t=1,\dots,n\\[3mm]
  h_{l+n+m}^2=\sum_{j=l+n+m+1}^{l+n+r}\alpha_{l+n+m,j}h_j, \ \ \mbox{if} \ \ m=1,\dots,r-1\\[3mm]
  h_{l+n+r}^2=0.
  \end{array}
    \end{equation}
    Thus the matrix of structural constants of this algebra has upper triangular form with zeros in the diagonal (see Theorem \ref{27}).

   {\it Case:} $r\geq 2$.  Here we consider a particular case of (\ref{ma1}), i.e., we take
 \begin{equation}\label{ma2}
  \begin{array}{lllll}
 h_i^2=\sum_{j=i+1}^{l+n+r}\alpha_{i,j}h_j, \ \ \mbox{if} \ \ i=1,\dots,l\\[3mm]
  h_{l+1}^2=ch_{l+n+r},\, c\ne 0 \\[3mm]
  h_{l+t}^2=\sum_{j=l+n+1}^{l+n+r}\alpha_{l+t,j}h_j, \ \ \mbox{if} \ \ t=2,\dots,n\\[3mm]
  h_{l+n+m}^2=\sum_{j=l+n+m+1}^{l+n+r}\alpha_{l+n+m,j}h_j, \ \ \mbox{if} \ \ m=1,\dots,r-1\\[3mm]
  h_{l+n+r}^2=0.
  \end{array}
    \end{equation}
    We assume that there exists a change $\psi$ of basis $\{h_1, \dots, h_{l+n+r}\}$, of algebra $E$,
 with multiplication (\ref{ma2}) to the basis $\{e_1, \dots, e_{l+n+r}\}$, of algebra $E'$,
 with multiplication (\ref{ma}).
 Since $\psi(\ann^j(E))=\ann^j(E')$, we get for $\psi$ the following equalities:
 $$\psi(h_i)=\left\{\begin{array}{lllll}
 \sum_{j=i}^{l+n+r}\gamma_{ij}e_j, \ \ i=1,\dots, l;\\[3mm]
 \sum_{j=l+1}^{l+n+r}\gamma_{ij}e_j, \ \ i=l+1,\dots, l+n;\\[3mm]
\sum_{j=i}^{l+n+r}\gamma_{ij}e_j, \ \ i=l+n+1,\dots, l+n+r.
\end{array}
\right.$$
 By these formulas we have
 $$\det\psi=\det(\gamma_{ij})_{i,j=1}^l\cdot\det(\gamma_{ij})_{i,j=l+1}^{l+n}\cdot\det(\gamma_{ij})_{i,j=l+n+1}^{l+n+r}.$$
 For any $p\in \{l+1,\dots,l+n\}$, we have
 $$\psi(h_ph_{l+n+r-1})=\psi(h_p)\psi(h_{l+n+r-1})=\sum_{j=l+1}^{l+n+r}\gamma_{pj}\gamma_{l+n+r-1,j}e^2_j$$ $$
 =\sum_{j=l+1}^{l+n+r-2}\gamma_{pj}\gamma_{l+n+r-1,j}e^2_j+\gamma_{p,l+n+r-1}\gamma_{l+n+r-1,l+n+r-1}e_{l+n+r}=0.$$
 In particular, from the last equality we get the following system of equations
 $$\gamma_{p,l+n+r-1}\gamma_{l+n+r-1,l+n+r-1}=0, \ \ \mbox{for all} \ \ p\in  \{l+1,\dots,l+n\}.$$
We note that $\gamma_{l+n+r-1,l+n+r-1}\ne 0$, consequently
\begin{equation}\label{pl}
\gamma_{p,l+n+r-1}=0, \ \ \mbox{for all}  \ \ p\in \{l+1, \dots, l+n\}.
\end{equation}
 Now consider
 \begin{equation}\label{l+1}
\psi(h_{l+1}^2)=\psi(ch_{l+n+r})=c\psi(h_{l+n+r})=c\gamma_{l+n+r,l+n+r}e_{l+n+r}.
\end{equation}
 Since $\det\psi\ne 0$ we have $\gamma_{l+n+r,l+n+r}\ne 0$.
 On the other hand we have
  \begin{equation}\label{l+2}
\psi(h_{l+1}^2)=\psi(h_{l+1})^2=\left(\sum_{j=l+1}^{l+n+r}\gamma_{l+1,j}e_j\right)^2 =
\sum_{j=l+1}^{l+n+r-2}\gamma^2_{l+1,j}e^2_j+\gamma_{l+1,l+n+r-1}e_{l+n+r}.
\end{equation}
By (\ref{pl})-(\ref{l+2}) we get the following contradiction
$$0\ne c\gamma_{l+n+r,l+n+r}=\gamma_{l+1,l+n+r-1}=0.$$
Thus such $\psi$ does not exist.\\[4mm]

 {\it Case:} $l\geq 2$. Here we consider the following particular case of (\ref{ma1}):
 \begin{equation}\label{ma12}
  \begin{array}{lllll}
  h_i^2=h_{i+1}+h_{i+2}, \ \ \mbox{if} \ \ i=1,\dots,l\\[3mm]
  h_{l+t}^2=\sum_{j=l+n+1}^{l+n+r}\alpha_{l+t,j}h_j, \ \ \mbox{if} \ \ t=1,\dots,n\\[3mm]
  h_{l+n+m}^2=h_{l+n+m+1}+h_{l+n+m+2}, \ \ \mbox{if} \ \ m=1,\dots,r-2\\[3mm]
  h_{l+n+r-1}^2=h_{l+n+r}\\[3mm]
  h_{l+n+r}^2=0.
  \end{array}
    \end{equation}
   Assume that there exists a change $\varphi$ of basis $\{e_1, \dots, e_{l+n+r}\}$, of algebra $E'$,
 with multiplication (\ref{ma}) to the basis $\{h_1, \dots, h_{l+n+r}\}$, of algebra $E$,
 with multiplication (\ref{ma2}). Then since
 $$\varphi(\ann^j(E'))=\ann^j(E), \ \ j\geq 1,$$
 we get
  $$\varphi(e_i)=\sum_{j=i}^{l+n+r}\gamma_{ij}h_j, \ \ i=1,\dots, l.$$
 To have $\det(\varphi)\ne 0$ it is necessary that $\gamma_{11}\gamma_{22}\dots\gamma_{l+n+r,l+n+r}\ne 0$.

  From (\ref{ma}) we get
 $$e_{i}^2e_{i+1}^2=e_{i+1}e_{i+2}=0, \ \ i=1, 2, \dots, l-2.$$
 $$e_{l-1}^2e_{l}^2=e_{l}\left(\sum_{k=1}^nc_ke_{l+k}\right)=0.$$
 $$e_{l}^2e_{l+1}^2=\left(\sum_{k=1}^nc_ke_{l+k}\right)\lambda_1 e_{l+n+1}=0.$$
 $$e_{l+1}^2e_{l+2}^2=\lambda_1\lambda_2 e^2_{l+n+1}=\lambda_1\lambda_2e_{l+n+2}\ne 0.$$
  Consequently,
  \begin{equation}\label{o1}
  \varphi(e_{i}^2e_{i+1}^2)=0, \ \ \mbox{for each} \ \ i=1, 2, \dots, l.
  \end{equation}
 On the other hand we have
 $$
  \varphi(e_{i}^2e_{i+1}^2)=\left( \sum_{j=i}^{l+n+r}\gamma_{ij}h_j \right)^2
  \left( \sum_{j=i+1}^{l+n+r}\gamma_{i+1,j}h_j \right)^2=$$
  $$
  \left( \gamma^2_{ii}h_i^2+\dots \right)
  \left( \gamma_{i+1,i+1}^2h_{i+1}^2+\dots \right)=$$
  $$\left( \gamma^2_{ii}(h_{i+1}+h_{i+2})+\dots \right)
  \left( \gamma_{i+1,i+1}^2(h_{i+2}+h_{i+3})+\dots \right)=$$
  \begin{equation}\label{o2}
  \gamma^2_{ii}\gamma_{i+1,i+1}^2h^2_{i+2}+\dots , \ \ \mbox{for each} \ \ i=1, 2, \dots, l-1.
  \end{equation}
 By (\ref{o1}) from the last equality we get $\gamma_{ii}\gamma_{i+1,i+1}=0$ for each $i=1,\dots,l-1$.
 Thus for $l\geq 2$ we see that $\det(\varphi)=0$.
\end{proof}
 \begin{rk} In \cite{EL} it was shown that if $E$ is a nilpotent evolution algebra of type $[1,n,1]$
  then $E$ is isomorphic to an algebra  $E_{11}(\mathcal U,b,u)$, (i.e., $l=r=1$), for some $(U,b,u)$ as
  in Definition \ref{d1}. Theorem \ref{t3a} shows that this kind of result is not true for any $l,r$ when at least one of them $>1$.
 \end{rk}

\begin{defn}\label{d2} Let $b:\mathcal U\times \mathcal U\to \mathbb F$ be a non degenerate, symmetric, bilinear form.
Define the algebra $$E(\mathcal U,b):=\mathcal U\times  \mathbb F\times \mathbb F$$ with multiplication
$$(u,\alpha_1,\alpha_2)(v,\beta_1,\beta_2)=(0,0,b(u,v))$$
\end{defn}
\begin{pro} $E(\mathcal U,b)$ is a nilpotent evolution algebra of type $[2,n]$.
\end{pro}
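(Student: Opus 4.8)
The plan is to mimic the proof of the preceding theorem. First I would fix an orthogonal basis $\{u_1,\dots,u_n\}$ of $\mathcal U$ relative to $b$, which exists because $b$ is symmetric, nondegenerate, and $\mathbb F$ has characteristic not two. Setting $\lambda_i:=b(u_i,u_i)$, nondegeneracy forces $\lambda_i\ne 0$ for every $i$: otherwise $u_i$ would be $b$-orthogonal to the whole basis, hence to all of $\mathcal U$, contradicting nondegeneracy of $b$.

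Next I would exhibit the natural basis. Put $e_i=(u_i,0,0)$ for $i=1,\dots,n$, together with $e_{n+1}=(0,1,0)$ and $e_{n+2}=(0,0,1)$. A direct application of the rule $(u,\alpha_1,\alpha_2)(v,\beta_1,\beta_2)=(0,0,b(u,v))$ gives $e_ie_j=(0,0,b(u_i,u_j))=0$ for $i\ne j$ by orthogonality, while $e_i^2=(0,0,\lambda_i)=\lambda_i e_{n+2}$ for $i=1,\dots,n$ and $e_{n+1}^2=e_{n+2}^2=0$. All products involving $e_{n+1}$ or $e_{n+2}$ vanish since these two vectors have zero $\mathcal U$-component and $b(\cdot,0)=0$. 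Hence $\{e_1,\dots,e_{n+2}\}$ is a natural basis and $E(\mathcal U,b)$ is indeed an evolution algebra.

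It then remains to compute the upper annihilating series. For $\ann(E)$, the product $(u,\alpha_1,\alpha_2)(v,\beta_1,\beta_2)=(0,0,b(u,v))$ vanishes for all $v$ exactly when $b(u,\cdot)=0$, that is (by nondegeneracy) when $u=0$, with no constraint on $\alpha_1,\alpha_2$; thus $\ann(E)=0\times\mathbb F\times\mathbb F$ has dimension $2$, giving $n_1=2$. For $\ann^2(E)$, observe that every product already lies in $0\times 0\times\mathbb F\subseteq\ann(E)$, so $\ann(E/\ann(E))=E/\ann(E)$ and therefore $\ann^2(E)=E$; this yields $n_2=(n+2)-2=n$, so the type is $[2,n]$. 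Since the upper annihilating series reaches $E$ at step $2$, the algebra is nilpotent, as recalled in Section~2 (one may also check directly that $E^2=\mathbb F e_{n+2}$ and $E^3=0$).

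The computation is essentially routine; the only point requiring care is the annihilator calculation, where nondegeneracy of $b$ is exactly what pins $\ann(E)$ down to the two-dimensional space $0\times\mathbb F\times\mathbb F$, and the same hypothesis forces each $\lambda_i\ne 0$ so that the $\mathcal U$-part genuinely contributes to $E^2$. I expect no real obstacle beyond this bookkeeping.
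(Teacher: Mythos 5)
Your proof is correct and follows essentially the same route as the paper: fix an orthogonal basis of $\mathcal U$, exhibit the natural basis $\{e_1,\dots,e_{n+2}\}$ with $e_i^2=\lambda_i e_{n+2}$ and $e_{n+1}^2=e_{n+2}^2=0$, then compute $\ann(E)=0\times\mathbb F\times\mathbb F$ and $\ann^2(E)=E$ to read off the type $[2,n]$. The extra details you supply (why $\lambda_i\ne 0$, why nondegeneracy pins down the annihilator) are exactly the steps the paper leaves as ``easy to see.''
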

\begin{proof} Let $\{u_1,\dots,u_n\}$ be an orthogonal basis of $\mathcal U$ related to $b$. Then $\{e_i=(u_i,0,0),i=1,\dots,n; e_{n+1}=(0,1,0), e_{n+2}=(0,0,1)\}$ is a natural basis of $E=E(\mathcal U,b)$. Moreover $e_ie_j=0$, $i\ne j$ and
$$e_i^2=\lambda_i(0,0,1)=\lambda_ie_{n+2}, \ \ i=1,\dots,n, \ \ \mbox{with} \ \ \lambda_i=b(u_i,u_i),$$
$$e_{n+1}^2=e_{n+2}^2=0.$$
It is easy to see that
$$\ann(E)=0\times \mathbb F\times\mathbb F, \ \ \ann^2(E)=E.$$
Thus $n_1=2$, $n_2=n$.
\end{proof}

\subsection{A construction of a nilpotent algebra of type $[n_1,n_2,\dots,n_k]$}
Consider a field $\mathbb F$ of characteristic not 2. Let $\mathcal U_i$ be a vector space over $\mathbb F$
with $\dim_{\mathbb F} \mathcal U_i=m_i$, $i=1,\dots,k$. Denote by $\{u_{i1}, \dots, u_{im_i}\}$ the basis elements of
$\mathcal U_i$, $i=1,\dots,k$.

\begin{defn}\label{dnn} Let $\xi_i : \mathcal U_i\otimes \mathcal U_i\to\mathcal U_{i+1}$, $i=1,\dots,k-1$
be symmetric bilinear mappings, such that $\xi_i(u_{ip}, u_{iq})=0, \, p\ne q$. We define the algebra
$$\mathbb E=\mathcal U_1\times \mathcal U_2\times\dots\times \mathcal U_k$$ with multiplication
$$
\left(x_1,x_2,\dots,x_k\right)\left(y_1,y_2,\dots,y_k\right)=$$
\begin{equation}\label{nkk}
\left(0, \xi_1(x_1,y_1), \xi_2(x_2,y_2), \dots, \xi_{k-1}(x_{k-1},y_{k-1})\right),
\end{equation}
for any $x_i,y_i\in \mathcal U_i$.
\end{defn}
\begin{thm}\label{t3} The algebra $\mathbb E=\mathcal U_1\times \mathcal U_2\times\dots\times \mathcal U_k$ is a nilpotent evolution algebra of type
$[n_1,n_2, \dots, n_k]$, with $n_i=m_{k-i+1}$.
\end{thm}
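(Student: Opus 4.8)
The plan is to first exhibit an explicit natural basis, and then to read off the upper annihilating series by a short induction on $k$.

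For the first step I would set $e_{ip}=(0,\dots,0,u_{ip},0,\dots,0)$, with $u_{ip}$ placed in the $i$-th slot, for $i=1,\dots,k$ and $p=1,\dots,m_i$; these $m_1+\dots+m_k=\dim_{\mathbb F}\mathbb E$ vectors form a basis. From the multiplication (\ref{nkk}) the product $e_{ip}e_{jq}$ has its $(s+1)$-st component equal to $\xi_s$ evaluated on the $s$-th components of the two factors, which is nonzero only when $s=i=j$. Hence $e_{ip}e_{jq}=0$ whenever $i\ne j$, and when $i=j$ with $p\ne q$ the product equals $\xi_i(u_{ip},u_{iq})=0$ by the defining hypothesis. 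Thus distinct basis vectors multiply to zero. Finally $e_{ip}^2=(0,\dots,0,\xi_i(u_{ip},u_{ip}),0,\dots,0)\in\mathcal U_{i+1}$ for $i<k$, which is a linear combination of the $e_{i+1,q}$, while $e_{kp}^2=0$ because no map $\xi_k$ occurs. This shows $\{e_{ip}\}$ is a natural basis and $\mathbb E$ is an evolution algebra.

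Next I would compute $\ann(\mathbb E)$. Writing $z=(z_1,\dots,z_k)$, formula (\ref{nkk}) gives $z\cdot y=(0,\xi_1(z_1,y_1),\dots,\xi_{k-1}(z_{k-1},y_{k-1}))$, so $z\in\ann(\mathbb E)$ iff $\xi_i(z_i,y_i)=0$ for all $y_i\in\mathcal U_i$ and all $i\le k-1$. Expanding $z_i=\sum_p a_p u_{ip}$ and using $\xi_i(u_{ip},u_{iq})=0$ for $p\ne q$, one gets $\xi_i(z_i,u_{iq})=a_q\,\xi_i(u_{iq},u_{iq})$, so $z_i$ lies in the radical of $\xi_i$ precisely when $a_q\,\xi_i(u_{iq},u_{iq})=0$ for every $q$. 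This is the crucial point, and the step I expect to be the main obstacle: the asserted type $[m_k,\dots,m_1]$ forces $\ann(\mathbb E)=\mathcal U_k$ (the last slot), and this holds exactly when each $\xi_i$, $i\le k-1$, is nondegenerate in the sense that $\xi_i(u_{ip},u_{ip})\ne 0$ for all $p$; otherwise some $e_{ip}$ with $i<k$ would already sit in the annihilator and enlarge it. I therefore read this nondegeneracy as the standing assumption on the $\xi_i$, and it is the one hypothesis that must be pinned down carefully. Granting it, $z\in\ann(\mathbb E)$ forces $z_1=\dots=z_{k-1}=0$, i.e. $\ann(\mathbb E)=\mathcal U_k$, of dimension $m_k=n_1$.

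The remaining slots then follow by induction on $k$ (the base case $k=1$ being the zero-product algebra $\mathcal U_1$). The key observation is that $\mathbb E/\ann(\mathbb E)=\mathbb E/\mathcal U_k$ is canonically isomorphic to the algebra $\mathcal U_1\times\dots\times\mathcal U_{k-1}$ built from $\xi_1,\dots,\xi_{k-2}$ in exactly the same way, since quotienting out $\mathcal U_k$ simply deletes the last component $\xi_{k-1}(x_{k-1},y_{k-1})$ of every product. Using the standard identity $\ann^{j}(\mathbb E)/\ann^1(\mathbb E)=\ann^{\,j-1}\!\big(\mathbb E/\ann^1(\mathbb E)\big)$ together with the induction hypothesis applied to this shorter algebra, I obtain $\ann^{j}(\mathbb E)=\mathcal U_{k-j+1}\times\dots\times\mathcal U_k$ for $j=1,\dots,k$, and in particular $\ann^{k}(\mathbb E)=\mathbb E$. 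By the criterion that a non-associative algebra is nilpotent iff its upper annihilating series reaches the whole algebra, $\mathbb E$ is nilpotent, and $n_j=\dim_{\mathbb F}\ann^j(\mathbb E)-\dim_{\mathbb F}\ann^{j-1}(\mathbb E)=m_{k-j+1}$, giving the asserted type. One could equally avoid the quotient and verify the displayed formula for $\ann^j(\mathbb E)$ directly by induction on $j$, mirroring the computations in the proof of Proposition \ref{p1}; the only ingredient beyond bookkeeping is again the nondegeneracy of the $\xi_i$.
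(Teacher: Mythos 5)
Your proposal follows the same route as the paper's own proof: exhibit the natural basis $e_{ip}=(0,\dots,0,u_{ip},0,\dots,0)$, verify the evolution multiplication table, and compute
$\ann^j(\mathbb E)=0\times\dots\times0\times\mathcal U_{k-j+1}\times\dots\times\mathcal U_k$
(the paper does this directly, asserting the formula for each $j$, rather than via the quotient $\mathbb E/\ann(\mathbb E)$ as you suggest, but the two bookkeeping schemes are interchangeable and you offer both). The one place where you go beyond the paper is your observation that $\ann(\mathbb E)=\mathcal U_k$ requires $\xi_i(u_{ip},u_{ip})\ne 0$ for all $i\le k-1$ and all $p$. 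This is correct, and it is a hypothesis that Definition~\ref{dnn} does not impose and that the paper's proof silently assumes when it writes ``it is easy to see that'' $\ann^1(\mathbb E)=0\times\dots\times 0\times\mathcal U_k$. Without it the statement fails: if, say, some $\xi_i\equiv 0$ with $i<k$ (or merely $\xi_i(u_{ip},u_{ip})=0$ for some $p$), then $\mathcal U_i$ (or $\mathbb F e_{ip}$) already sits inside $\ann(\mathbb E)$, the first annihilator is strictly larger than $\mathcal U_k$, and the type is no longer $[m_k,\dots,m_1]$. So the ``main obstacle'' you flag is a genuine gap in the paper's statement rather than in your argument; making the diagonal nondegeneracy of the $\xi_i$ an explicit standing hypothesis, as you propose, is the right fix, and with it your proof is complete.
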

\begin{proof} We note that the following is a natural basis of $\mathbb E$:
$$\{e_{11},e_{12},\dots, e_{1m_1};
 e_{21},e_{22},\dots, e_{2m_2};\dots ; e_{k1},e_{k2},\dots, e_{km_k}\},$$
 where
 $$e_{ij}=(0,\dots,0, u_{ij}, 0,\dots,0)\in \mathbb E, \ \ i=1,\dots,k, \, j=1,\dots, m_i.$$
 Moreover we have $e_{ij}e_{pq}=0$, $(i,j)\ne (p,q)$ and
 $$e_{ij}^2=(0,\dots,0,\xi_i(u_{ij},u_{ij}),0,\dots,0), \ \ i=1,\dots,k-1, j=1,\dots,m_i,$$
 $$e_{kj}^2=0, \ \ j=1,\dots, m_k$$

 It is easy to see that
 $$\ann^1(\mathbb E)=0\times 0\times \dots 0\times \mathcal U_k, \ \ \mbox{i.e.}, \ \
 \dim \left(\ann(\mathbb E)\right)=m_k.$$
 $$\ann^2(\mathbb E)=0\times 0\times \dots 0 \times\mathcal U_{k-1}\times \mathcal U_k,,
 \ \ \mbox{i.e.}, \ \ \dim \left(\ann^2(\mathbb E)\right)=\sum_{s=k-1}^km_s.$$
 $$\dots\dots\dots$$
  $$\ann^j(\mathbb E)=0\times 0\times \dots 0\times \mathcal U_{k-j+1}\times\dots\times\mathcal U_{k-1}\times \mathcal U_k,, \ \ \mbox{i.e.}, \ \ \dim \left(\ann^j(\mathbb E)\right)=\sum_{s=k-j+1}^km_s.$$
  Thus we have
  $$n_i=\sum_{s=k-i+1}^km_s-\sum_{s=k-i+2}^km_s=m_{k-i+1}.$$
  Then corresponding algebra is of type $[n_1,n_2,\dots,n_k]$.
\end{proof}
\begin{ex} Let $\mathcal V_1$ be an evolution algebra with multiplication $\xi_1:\mathcal V_1\otimes \mathcal V_1\to \mathcal V_1$,
and $\mathcal V_2:=\xi_1(\mathcal V_1,\mathcal V_1)$. Assume a multiplication $\xi_2$ is given on $\mathcal V_2$ such that $(\mathcal V_2,\xi_2)$ is an evolution algebra and $\mathcal V_3:=\xi_2(\mathcal V_2,\mathcal V_2)$.  Consequently, define an evolution algebra $(\mathcal V_{i-1},\xi_{i-1})$ with $\xi_{i-1}(\mathcal V_{i-1},  \mathcal V_{i-1})=\mathcal V_i$, \, $i=1,\dots,k$.
Consider $$\mathcal E=\mathcal V_1\times \mathcal V_2\times\dots\times \mathcal V_k$$ with multiplication
$$
\left(x_1,x_2,\dots,x_k\right)\left(y_1,y_2,\dots,y_k\right)=$$
\begin{equation}\label{nkv}
\left(0, \xi_1(x_1,y_1), \xi_2(x_2,y_2), \dots, \xi_{k-1}(x_{k-1},y_{k-1})\right),
\end{equation}
for any $x_i,y_i\in \mathcal V_i$. Then by Theorem \ref{t3} the algebra $\mathcal E$ is a nilpotent evolution algebra of type
$[n_1,n_2, \dots, n_k]$, with $n_i=\dim\left(\mathcal V_{k-i+1}\right)$.

\end{ex}

\section*{ Acknowledgements}

The work partially supported by Projects MTM2016-76327-C3-2-P and MTM2016-
79661-P of the Spanish Ministerio of Econom\'{\i}a and Competitividad, and Research Group FQM 199 of the Junta de Andaluc\'{\i}a (Spain),  all of them include European Union FEDER support; grant 853/2017 Plan Propio University of Granada (Spain);
 Kazakhstan Ministry of Education and Science, grant 0828/GF4.

{}
\end{document}